\documentclass[11pt, reqno]{amsart}
\usepackage[dvipsnames,usenames]{color}
\usepackage{amsmath}
\usepackage{amsfonts}
\usepackage{amssymb}
\usepackage{mathtools}
\usepackage{mathrsfs}
\usepackage{amsthm}
\usepackage{hyperref}
\usepackage{url}
\usepackage{comment}
\date{}

\newtheorem{lem}{Lemma}[section]

\newtheorem{con}{Conjecture}[section]

\theoremstyle{definition}

\numberwithin{equation}{section}

\theoremstyle{remark}

\numberwithin{equation}{section}

\makeatletter
\@namedef{subjclassname@2020}{\textup{2020} Mathematics Subject Classification}
\makeatother

\begin{document}
	\setcounter{page}{1}

	\title[Convex combinations of univalent harmonic
mappings]
{ Note on the Coefficient Conjecture of Clunie and Sheil-Small on the univalent harmonic mapping }

\author[O. Mishra, A. \c{C}etinkaya]{Omendra Mishra$^{*}$ and  Asena \c{C}etinkaya}

\address{\noindent $^*$Omendra Mishra\vskip.03in
	Department of Mathematical and Statistical Sciences,
	Institute of Natural Sciences and Humanities, Shri Ramswaroop Memorial University, Lucknow 225003, India.\\}

\email{mishraomendra$@$gmail.com}	
		\address{\noindent Asena \c{C}etinkaya  \vskip.03in
		Department of Mathematics  and Computer Science,
		Istanbul K\"{u}lt\"{u}r University, Istanbul, T\"{u}rkiye.}
		\email{asnfigen$@$hotmail.com}


\subjclass[2020]{31A05, 30C45, 30C55.}

	\keywords{Univalent harmonic mappings, shear construction, Conjecture}
	
\thanks{$^*$Corresponding author: Omendra Mishra,
	Email:mishraomendra@gmail.com}
	\date{\today}

	\begin{abstract}
In this article, we construct generalized harmonic univalent mappings and find its coefficients bounds. We present the counterexample to validate the coefficient conjecture proposed by Clunie and Sheil-Small for the class of functions $f=h+\overline{g}\in \mathcal{S}_{\mathcal{H}}$ with the help of these examples we improve the conjecture bounds of class $\mathcal{S}_{\mathcal{H}}$.
	\end{abstract}
\maketitle


\section{Introduction}
Let $\mathcal{H}$ represent the class of complex-valued functions $f=u+iv$ that are harmonic on the unit disk $\mathbb{D}=\left\{ z\in \mathbb{C}:\left\vert
{z}\right\vert <1\right\} ,$ where $u$ and $v$ are real-valued harmonic
functions in $\mathbb{D}$. A function $f$ ${\in \mathcal{H}}$ can be
expressed as $f=h+\overline{g},$ where $h$ and $g$ are analytic in $\mathbb{D%
},$ called the analytic and co-analytic parts of $f$, respectively. 

The
Jacobian of $f=h+\overline{g}$ is given by $$J_{f}(z)=|{h^{\prime }(z)}|^{2}-|%
{g^{\prime }(z)}|^{2}.$$
A classical result of Lewy \cite{lewy} showed that each harmonic function $f=h+\overline{g}%
\in \mathcal{H}$ is locally univalent and sense-preserving 
if and only if $J_{f}(z)>0$ in $\mathbb{D}$, which is equivalent to the
existence of an analytic function $\omega _{f}(z)=g^{\prime }(z)/h^{\prime
}(z)$ in $\mathbb{D}$ such that
$|{\omega }_{f}{(z)}|<1$. As usual, $\omega _{f}$ is called the dilatation of $f$. For more details, we  refer the reader to \cite{am,cm,MD,MNW,MR 12,du}.

The familiar class of univalent  harmonic functions $f=h+\overline{g}\in \mathcal{H}$ with normalized conditions $h(0)=g(0)=0$ and $h^{\prime }(0)=1$ is denoted by $\mathcal{S}_{\mathcal{H}}$. For a mapping $f=h+\overline{g}\in \mathcal{S}_{\mathcal{H}}$, $h$ and $g$ are of the forms
\begin{equation}
	h(z)=z+\sum_{n=2}^{\infty }a_{n}z^{n}\text{ \ and \ }%
	g(z)=\sum_{n=1}^{\infty }b_{n}z^{n}\ \ \left(\left\vert
	b_{1}\right\vert <1;z\in \mathbb{D}\right).  \label{HG}
\end{equation}

The subclass of functions $f=h+\overline{g}\in \mathcal{S}_{\mathcal{H}}$ that satisfy the additional condition $g^{\prime }(0)=0$ is denoted by $\mathcal{S}^0_{\mathcal{H}}$. Moreover, the subclasses of starlike, convex, and close-to-convex functions $f$ in $\mathcal{S}_{\mathcal{H}}$ $\left( \mathcal{S}^0_{\mathcal{H}} \right) $ are, respectively, denoted by $\mathcal{S}^\ast_{\mathcal{H}} $ $\left( \mathcal{S}^{\ast,0}_{\mathcal{H}}\right) $, $\mathcal{K}_{\mathcal{H}} $ $\left( \mathcal{K}^0_{\mathcal{H}}\right) $ and $\mathcal{C}_{\mathcal{H}} $ $\left( \mathcal{C}^{0}_{\mathcal{H}}\right)$.

A domain $\Omega $ $\subset\mathbb{C}$ is said to be convex in the direction $\varphi \in \lbrack 0,\pi ),$ if for all $t$ $\in $ $\mathbb{C}$, the set $\Omega $ $\cap $ $\{t+re^{i\varphi }:r\in \mathbb{R}\}$ is connected or empty. A function is said to be convex in the direction $\varphi $ if it maps $\mathbb{D}$ univalently to a domain  convex in the direction $\varphi$.

In 1984, Clunie-Sheil-Small \cite{JT 84} 
introduced a method, known as \textit{shearing technique}, to construct a univalent harmonic mapping from a related conformal mapping. By this method, a generalized result can be presented as follows:
\begin{lem}
	\label{lemma} A locally univalent harmonic function $f=h+\overline{g}$ in $\mathbb{D}$ is a univalent harmonic mapping of $\mathbb{D}$ onto a domain convex in the direction $\varphi $ if and only if $h-e^{2i\varphi }g$ is a univalent analytic mapping of $\mathbb{D}$ onto a domain convex in the direction $\varphi $.
\end{lem}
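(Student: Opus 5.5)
This is the Clunie--Sheil-Small shear theorem in rotated form. The plan is to reduce to the horizontal case $\varphi=0$ by a rotation, and then prove the horizontal case by comparing the two maps along horizontal lines.

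\emph{Reduction.} Set $F=e^{-i\varphi}f$. Then
\[
F=e^{-i\varphi}h+\overline{e^{i\varphi}g}=H+\overline{G},\qquad H=e^{-i\varphi}h,\quad G=e^{i\varphi}g.
\]
The map $f$ is univalent onto a domain convex in direction $\varphi$ iff $F$ is univalent onto a domain convex horizontally. Moreover
\[
H-G=e^{-i\varphi}\bigl(h-e^{2i\varphi}g\bigr),
\]
so $h-e^{2i\varphi}g$ is convex in direction $\varphi$ iff $H-G$ is convex horizontally. Local univalence is preserved, since $|G'|=|g'|$ and $|H'|=|h'|$. So it suffices to show: a locally univalent $f=h+\overline g$ is univalent and convex in the horizontal direction iff $h-g$ is.

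\emph{Key identity.} Write $\phi=h-g$. Then
\[
\operatorname{Im} f=\operatorname{Im}(h-g)=\operatorname{Im}\phi, \qquad \operatorname{Re} f=\operatorname{Re}(h+g).
\]
So $f$ and $\phi$ have the same imaginary part, and $f$ maps the level set $\{\operatorname{Im}\phi=c\}$ into the horizontal line $\operatorname{Im} w=c$. I will assume $f$ is sense-preserving, i.e.\ $|g'|<|h'|$ (otherwise replace $f$ by $\bar f$ in the usual way). Then $\phi'=h'(1-\omega)\neq0$ because $|\omega|<1$, so $\phi$ is locally univalent. The real parts differ by $\operatorname{Re}(h+g)-\operatorname{Re}(h-g)=2\operatorname{Re}g$.

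The central computation is along a level curve $\gamma$ of $\operatorname{Im}\phi$. Along $\gamma$, $\phi'(z)\,dz$ is real. Then
\[
d\operatorname{Re} f=\operatorname{Re}\bigl(h'\,dz+g'\,dz\bigr)=\operatorname{Re}\Bigl(\phi'dz\,\frac{1+\omega}{1-\omega}\Bigr),
\]
and $\operatorname{Re}\frac{1+\omega}{1-\omega}>0$. Hence $\operatorname{Re} f$ and $\operatorname{Re}\phi$ are strictly monotone in the same direction along each level curve.

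\emph{Both implications.} Suppose $\phi$ is univalent and horizontally convex. Each set $\{\operatorname{Im}\phi=c\}$ is then a single cross-cut of $\mathbb D$, and $\operatorname{Re}\phi$ is monotone along it. By the identity above, $f$ is injective on each cross-cut; different cross-cuts go to different horizontal lines, so $f$ is univalent. The image of each cross-cut is an interval (connected), so $f(\mathbb D)$ is horizontally convex. Conversely, suppose $f$ is univalent with horizontally convex image. The preimages $f^{-1}(\{\operatorname{Im} w=c\})$ are the connected arcs $\{\operatorname{Im}\phi=c\}$, and $\operatorname{Re} f$ is monotone on them. The same derivative identity, applied in reverse, makes $\operatorname{Re}\phi$ strictly monotone on each arc. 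This gives univalence of $\phi$ and the interval property of its horizontal sections.

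\emph{Main obstacle.} The delicate step is the topological one. One must show that a level set $\{\operatorname{Im}\phi=c\}$ is a single arc, or else handle several components, so that monotonicity along components yields global injectivity and connectedness of sections. I would handle this by working with the preimage under the univalent map ($\phi$ in one direction, $f$ in the other): the preimage of a horizontal segment under a homeomorphism is a single arc. This uses univalence of the known map to control the level curves of the other. Alternatively, I would exhaust $\mathbb D$ by the disks $|z|<r$ and pass to the limit, which avoids boundary pathologies.
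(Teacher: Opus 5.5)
Your proof is correct. The paper gives no proof of this lemma; it quotes it from Clunie and Sheil-Small, so the relevant comparison is with their argument.

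\textbf{What you have right.} The reduction to $\varphi=0$ via $F=e^{-i\varphi}f$ and $H-G=e^{-i\varphi}(h-e^{2i\varphi}g)$ is right. In each direction, $\{\operatorname{Im}\phi=c\}$ is connected because, as you say, it is the preimage of a horizontal section under whichever map is assumed to be a homeomorphism. So the ``main obstacle'' you flag is already handled by your own argument, and the exhaustion by $|z|<r$ is not needed. You should still note that $f(\mathbb{D})$ is open (by invariance of domain, or because $J_f\neq 0$), so that it is a domain and $f^{-1}$ is continuous.

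\textbf{The Clunie--Sheil-Small route.} They organize the same idea in the image plane, through a lemma. If $\Omega$ is convex in the horizontal direction and $p\colon\Omega\to\mathbb{R}$ is continuous, then $w\mapsto w+p(w)$ is univalent iff it is locally univalent, and its range is then again horizontally convex. The reason is that on a horizontal segment in $\Omega$ the map is a continuous real function of one variable, so equal values at two points force an interior extremum, which contradicts local injectivity. The theorem follows by applying the lemma to $f\circ\phi^{-1}(w)=w+2\operatorname{Re} g(\phi^{-1}(w))$ on $\phi(\mathbb{D})$, and to $\phi\circ f^{-1}(w)=w-2\operatorname{Re} g(f^{-1}(w))$ on $f(\mathbb{D})$. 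Your level arcs are exactly the preimages of their horizontal segments.

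\textbf{What each route buys.} Their route is purely topological. It handles both implications with one lemma, needs only continuity of $p$, and involves no analysis of level curves. Your route gives an extra fact: since $\operatorname{Re}\frac{1+\omega}{1-\omega}>0$, the functions $\operatorname{Re} f$ and $\operatorname{Re}\phi$ move in the same direction along each level arc.

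That orientation fact is more than you need, however. Strict monotonicity of each of $\operatorname{Re} f$ and $\operatorname{Re}\phi$ along a connected level arc $\gamma$ follows from local univalence alone. Both $(f\circ\gamma)'=h'\gamma'+\overline{g'\gamma'}$ and $(\phi\circ\gamma)'=\phi'\gamma'$ are real, and they are nonzero because $|h'|\neq|g'|$ and $\phi'\neq 0$. Likewise, the reduction to the sense-preserving case via $\bar f$ is unnecessary: $\phi'(z_0)=0$ would mean $h'(z_0)=g'(z_0)$, hence $J_f(z_0)=0$.
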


By shearing the \textit{Koebe function} $$k(z)=\frac{z}{(1-z)^2}$$ horizontally convex with dilatation $w(z) = z$,  Clunie-Sheil-Small \cite{JT 84} introduced the \textit{harmonic Koebe function} for $f=h+\overline{g}\in \mathcal{S}^{0}_{\mathcal{H}}$ 
\begin{equation}\label{equa13}
	k_0(z)=\frac{z-\frac{1}{2}z^2+\frac{1}{6}z^3}{(1-z)^3}+\overline{\frac{\frac{1}{2}z^2+\frac{1}{6}z^3}{(1-z)^3}}.
\end{equation}
They conjectured that the Taylor coefficients of the series of $h$ and $g$ satisfy the inequalities
$$|a_n|\leq \frac{1}{6}(2n+1)(n+1), \ \ |b_n|\leq \frac{1}{6}(2n-1)(n-1)$$
for all $n\geq2$. 

By shearing the conformal mapping $$s(z)=h(g)+g(z)=\frac{z}{1-z}$$ vertically convex with dilatation $w(z)=-z$, Clunie-Sheil-Small \cite{JT 84} also considered the harmonic half-plane mapping  
\begin{equation}\label{eq:fo}
	s_0(z)=\frac{z-\frac{1}{2}z^2}{(1-z)^2}-\overline{\frac{\frac{1}{2}z^2}{(1-z)^2}}.
\end{equation}
They proved that the
Taylor coefficients of the series of $h$ and $g$ satisfy the inequalities
$$|a_n|\leq \frac{n+1}{2},\  \ |b_n|\leq \frac{n-1}{2}$$
for all $n\geq2$.

Clunie-Sheil-Small \cite{JT 84} also conjectured that for all functions of class  $f=h+\overline{g}\in \mathcal{S}_{\mathcal{H}}$, satisfy the inequalities
$$|a_n|<\frac{1}{3}(2n^{2}+1), \ \ |b_n|< \frac{1}{3}(2n^{2}+1)$$
for all $n\geq2$.
 
In this paper, we construct generalized harmonic Koebe function for harmonic mappings $f_c=h_c+\overline{g}_c\in \mathcal{S}^{c}_{\mathcal{H}}$, where $c>0$.
We generalized the \textit{Koebe function} $$k_c(z)=h_c-g_c=\frac{2c}{(1+c)}\frac{z}{(1-z)^2}$$  with dilatation     $w(z) = ( (1-c)/(1+c))+z)/(1+((1-c)/(1+c))z)$, where $c>0$, and get the generalized harmonic Koebe function as
\begin{equation}\label{equa131}
	k_c(z)=\frac{z-\frac{c}{(1+c)}z^2+\frac{1}{3(1+c)}z^3}{(1-z)^3}+\overline{\frac{((1-c)/(1+c))z+\frac{c}{(1+c)}z^2+\frac{1}{3(1+c)}z^3}{(1-z)^3}}.
\end{equation}
Taylor coefficients of the series of $h$ and $g$ satisfy the inequalities
$$|a_n|\leq \frac{2n^2+3cn+1}{3(1+c)}, \ \ |b_n|\leq  \frac{2n^2-3cn+1}{3(1+c)}$$
for all $n\geq2$. For $c=1$, we get the harmonic Koebe function of the form \ref{equa13} (see also \cite{Meteljevic 2,Meteljevic 3,Meteljevic 4}). 

Generalized right-half plane mapping for the class of harmonic functions $f_c=h_c+\overline{g}_c\in \mathcal{S}_{\mathcal{H}}$, first studied by Muir \cite{Muir}. By shearing the conformal mapping $$s_c(z)=h_c+g_c=\frac{2}{(1+c)}\frac{z}{1-z}$$ vertically convex with dilatation $w(z)=((1-c)/(1+c))-z)/(1-((1-c)/(1+c))z)$, where $c>0$, and considered the generalized harmonic right-half plane mapping  
\begin{equation}\label{eq:fo1}
	s_c(z)=\frac{(1+c)z-z^{2}}{(1+c)(1-z)^{2}}+\overline{\frac{(1-c)z-z^{2}}{(1+c)(1-z)^{2}}}.
\end{equation}
Muir\cite{Muir} proved that the
Taylor coefficients of the series of $h$ and $g$ satisfy the inequalities
$$|a_n|\leq \frac{1+nc}{1+c},\  \ |b_n|\leq \frac{1-nc}{1+c}$$
for all $n\geq2$.

For $c=1$, we get the harmonic right-half plane mapping of the form \ref{eq:fo}.

 We also construct another generalized harmonic Koebe function for harmonic mappings $f_a=h_a+\overline{g}_a\in \mathcal{S}^{a}_{\mathcal{H}}$, where $a \in (-1,1)$.
 We generalized the \textit{Koebe function} $$k_a(z)=h_a-g_a=\frac{(1-a)z}{(1-z)^2}$$  with dilatation $w(z) = (a+z)/(1+az)$, where $-1<a<1$, and get the generalised harmonic Koebe function as
 \begin{equation}\label{equa131}
 k_a(z)=\frac{z+\frac{a-1}{2}z^2+\frac{1+a}{6}z^3}{(1-z)^3}+\overline{\frac{az+\frac{1-a}{2}z^2+\frac{1+a}{6}z^3}{(1-z)^3}}.
 \end{equation}
 Taylor coefficients of the series of $h$ and $g$ satisfy the inequalities
 $$|a_n|\leq \frac{2(1+a)n^2+3(1-a)n+(1+a)}{6}, \ \ |b_n|\leq \frac{2(1+a)n^2+3(a-1)n+(1+a)}{6}$$
 for all $n\geq2$. For $a=0$, we get the harmonic Koebe function of the form \ref{equa13}.
 
 Generalized right-half plane mapping for the class of harmonic functions $f_a=h_a+\overline{g}_a\in \mathcal{S}^{a}_{\mathcal{H}}$, first studied by Liu and Ponnusamy \cite{Zs}. By shearing the conformal mapping $$s_a(z)=h_a+g_a=\frac{(1+a)z}{1-z}$$ vertically convex with dilatation $w(z)=a-z/1-az$, where $-1<a<1$, they considered the harmonic right-half plane mapping  
 \begin{equation}\label{eq:fo1}
 s_a(z)=\frac{z-\frac{1}{2}(1+a)z^{2}}{(1-z)^{2}}+\overline{\frac{az-\frac{1}{2}(1+a)z^{2}}{(1-z)^{2}}}.
 \end{equation}
 They proved that the
 Taylor coefficients of the series of $h$ and $g$ satisfy the inequalities
 $$|a_n|\leq \frac{(1+a)+n(1-a)}{2},\  \ |b_n|\leq \frac{(1+a)-n(1-a)}{2}$$
 for all $n\geq2$.
 
 For $a=0$, we get the harmonic right-half plane mapping of the form \ref{eq:fo}. Mappings  defined by $f_c=h_c+\overline{g}_c\in \mathcal{S}^{c}_{\mathcal{H}}$, where $c>0$ easily obtained by sustituting $a=(1-c)/(1+c)$ in Mappings defined by $f_a=h_a+\overline{g}_a\in \mathcal{S}^{a}_{\mathcal{H}}$, where $a \in (-1,1)$ (see \cite{Mishra cetinkaya}).

\vskip .10in

\section{Coefficient Conjecture of univalent harmonic mappings}
In this section, we present the coefficient conjecture proposed by Clunie-Sheil-Small \cite{JT 84} and provide the counterexample to validate the conjecture of class of functions $f=h+\overline{g}\in \mathcal{S}_{\mathcal{H}}$ also provided the improved conjecture for class $\mathcal{S}_{\mathcal{H}}$.
\begin{con}\label{clunie}
	Functions $f=h+\overline{g}\in \mathcal{S}^0_{\mathcal{H}}$, satisfy the inequalities
	$$|a_n|\leq \frac{1}{6}(2n+1)(n+1), \ \ |b_n|\leq \frac{1}{6}(2n-1)(n-1)$$
	for all $n\geq2$.  	
\end{con}
\begin{proof}
Bounds of the conjecture attained by harmonic koebe function defined in \eqref{equa13}. 	
\end{proof}
\begin{con}\label{con2}
All functions $f=h+\overline{g}\in \mathcal{S}_{\mathcal{H}}$, satisfy the inequalities

\begin{equation}
|a_n|<\frac{1}{3}(2n^{2}+1), \ \ |b_n|< \frac{1}{3}(2n^{2}+1) \label{web}
\end{equation} 
for all $n\geq2$. 	
\end{con}
\begin{proof}
   	By generalized harmonic Koebe function \eqref{equa131}.  $$k_c(z)=h_c-g_c=\frac{2c}{(1+c)}\frac{z}{(1-z)^2}$$  with dilatation $w(z) = ((1-c)/(1+c))+z)/(1+((1-c)/(1+c))z)$, where $c>0$, and get the generalized harmonic Koebe function as
   	\begin{equation*}
   	k_c(z)=\frac{z-\frac{c}{(1+c)}z^2+\frac{1}{3(1+c)}z^3}{(1-z)^3}+\overline{\frac{((1-c)/(1+c))z+\frac{c}{(1+c)}z^2+\frac{1}{3(1+c)}z^3}{(1-z)^3}}.
   	\end{equation*}
   	Taylor coefficients of the series of $h$ and $g$ satisfy the inequalities
   	$$|a_n|\leq \frac{2n^2+3cn+1}{3(1+c)}, \ \ |b_n|\leq  \frac{2n^2-3cn+1}{3(1+c)}$$
   	for all $n\geq2$. Also for  generalized harmonic Koebe function for harmonic mappings $f_a=h_a+\overline{g}_a\in \mathcal{S}^{a}_{\mathcal{H}}$, where $a \in (-1,1)$.
   	We have $$k_a(z)=h_a-g_a=\frac{(1-a)z}{(1-z)^2}$$  with dilatation $w(z) = (a+z)/(1+az)$, where $-1<a<1$, and get the generalised harmonic Koebe function as 
   	\begin{equation}
   	k_a(z)=\frac{z+\frac{a-1}{2}z^2+\frac{1+a}{6}z^3}{(1-z)^3}+\overline{\frac{az+\frac{1-a}{2}z^2+\frac{1+a}{6}z^3}{(1-z)^3}}.
   	\end{equation}
   	Taylor coefficients of the series of $h$ and $g$ satisfy the inequalities
   	$$|a_n|\leq \frac{2(1+a)n^2+3(1-a)n+(1+a)}{6}, \ \ |b_n|\leq \frac{2(1+a)n^2+3(a-1)n+(1+a)}{6}$$
   	for all $n\geq2$. These bounds prove that inequalities in \eqref{web} is true for all class $\mathcal{S}_{\mathcal{H}}$, also these examples provided the equality for class $\mathcal{S}_{\mathcal{H}}$.
   	\end{proof}
   	We proposed the improved conjecture for all functions $f=h+\overline{g}\in \mathcal{S}_{\mathcal{H}}$ in the following form
   	\begin{con}
   		All functions $f=h+\overline{g}\in \mathcal{S}_{\mathcal{H}}$, satisfy the inequalities
   			\begin{equation}
   		|a_n|\leq \frac{2(1+a)n^2+3(1-a)n+(1+a)}{6}, \ \ |b_n|\leq \frac{2(1+a)n^2+3(a-1)n+(1+a)}{6}
   		\end{equation} 
   		for all $n\geq2$, where $-1<a<1$. 	
   	\end{con}
   \vskip .05in
   \noindent{\bf  Funding.} Not Applicable.
   
   \vskip .05in
   \noindent{\bf Conflicts of interest.} The authors declare that they have no conflict of interest.
   
   \vskip .05in
   \noindent{\bf Data availability statement.}  The datasets generated during and/or analysed during the current study are available from the corresponding author on reasonable request.


\begin{thebibliography}{99}
	\bibitem{am}
	A. Aleman and  A. Constantin, Harmonic maps and ideal fluid flows, \textit{Arch. Ration. Mech. Anal.} \textbf{204} (2012),
	479--513. 
	\vskip.05in

	\bibitem{JT 84} J. Clunie and T. Shiel-Small, Harmonic univalent functions, \textit{Ann. Acad. Sci. Fenn. Ser. A. I Math.} \textbf{9} (1984), 3--25.
	\vskip.05in
	\bibitem{cm}
	O. Constantin and M. J. Mart\'{i}n,  A harmonic maps approach to fluid flows, \textit{Math. Ann.} \textbf{369} (2017), 1--16.
	
	
	\vskip.05in
	\bibitem{MD} M. Dorff, Convolutions of planar harmonic convex mappings, \textit{Complex Var. Theory Appl.} \textbf{45} (2001), 263--271.
	\vskip.05in
	\bibitem{MNW} M. Dorff, M. Nowak\ and\ M. Wo\l oszkiewicz, Convolutions of harmonic convex mappings, \textit{Complex Var. Elliptic Equ.} \textbf{57} (2012), 489--503.
	\vskip.05in
	\bibitem{MR 12} M. Dorff and J. Rolf, \textit{Anamorphosis, mapping problems, and harmonic univalent functions}, Explorations in Complex Analysis, Math. Assoc. of America, Inc., Washington, DC (2012), 197--269.
	\vskip.05in
	\bibitem{du} P. Duren, \textit{Harmonic mappings in the plane}, Cambridge University Press, Cambridge, 2004.
	\vskip.05in
	\bibitem{lewy} H. Lewy, On the non-vanishing of the Jacobian in certain one-to-one mappings, \textit{Bull. Amer. Math. Soc.} \textbf{42} (1936), 689--692.
	\vskip.05in
	\bibitem{Zs} Z. Liu and S. Ponnusamy, Univalency of convolutions of
	univalent harmonic right half-plane mappings, Comput. Methods Funct. Theory
	\textbf{17} (2017), no.~2, 289--302.
	\vskip.05in

	\bibitem{Muir} S. Muir, Harmonic mappings convex in one or every direction, \textit{Comput. Methods Funct. Theory}  \textbf{6} (2012), 221--239.
	\vskip.05in
	
		\bibitem{Mishra cetinkaya} A. Cetinkaya, O. Mishra,  Radii results of Convex combinations and convolution  of generalized univalent harmonic
		mappings, \textit{Filomat}  \textbf{40}, no.~8, (2026).
		\vskip.05in
		
	
	\bibitem{Meteljevic} M. Mateljevic, The Lower Bound for the Modulus of the Derivatives and Jacobian of Harmonic Injective Mappings. Filomat 2015, \textbf{29}, 221--244.
	\vskip.05in
	\bibitem{Meteljevic 1} M.	Mateljevic, N. Mutavdzic, The Boundary Schwarz Lemma for Harmonic and Pluriharmonic Mappings and Some Generalizations. Bull. Malays. Math. Sci. Soc. 2022, \textbf{45}, 3177--3195.
	\vskip.05in
	\bibitem{Meteljevic 2}  M. Mateljevic, Quasiconformal and Quasiregular harmonic analogues of
	Koebe's Theorem and Applications, Ann. Acad. Sci. Fenn.-M, Vol 32, (2007),
	No 2, 301--315.
	\vskip.05in
	\bibitem{Meteljevic 3} M. Mateljevic, N. Mutavdzic and B. N. Ornek, Note on some classes of
	holomorphic functions related to Jack’s and Schwarz’s lemma, Appl. Anal.
	Discrete Math. 16 (2022), 111--131. https://doi.org/10.2298/AADM200319006M.
	\vskip.05in
	\bibitem{Meteljevic 4} M. Mateljevic, Versions of Koebe 1/4 theorem for analytic and
	quasiregular harmonic functions and applications, Publications de l'institut
	Mathematique, Nouvelle serie, tome 84 (98)  (2008), 61-72.
\end{thebibliography}
\end{document}